\newcommand{\R}{\mathbb{R}}
\newcommand{\T}{\mathbb{T}}
\newcommand{\Z}{\mathbb{Z}}
\newcommand{\Id}{\textup{Id}}
\newcommand{\A}{\mathcal{A}}
\newcommand{\dV}{\,d\mathcal{V}}
\newcommand{\wt}[1]{\widetilde{#1}}
\newcommand{\grad}{\textup{grad}\,}
\renewcommand{\del}{\partial}
\newcommand{\odd}{\textup{odd}}
\newcommand{\even}{\textup{even}}
\newcommand{\eps}{\textcolor{blue}{\epsilon}}
\newtheorem{theorem}{Theorem}
\newtheorem{lemma}[theorem]{Lemma}
\newtheorem{corollary}[theorem]{Corollary}
\newtheorem{remark}[theorem]{Remark}
\newtheorem{proposition}[theorem]{Proposition}
\newtheorem{example}[theorem]{Example}
\numberwithin{theorem}{section}
\title{From Euclidean field theory to hyperk\"ahler Floer theory via regularized polysymplectic geometry}
\author{Ronen Brilleslijper and Oliver Fabert}
\date{\today}
\begin{document}

\maketitle
\begin{abstract}
    Hamiltonian Floer theory plays an important role for finding periodic solutions of Hamilton’s equation, which can be seen as a generalization of Newton’s equation. Generalizing Newton’s equation to Laplace's equation with non-linearity, we show, building on the work of Ginzburg and Hein, that this role is taken over by the hyperk\"ahler Floer theory of Hohloch, Noetzel, and Salamon. Apart from establishing $C^0$-bounds in order to be able to deal with noncompact hyperk\"ahler manifolds, the core ingredient is a regularization scheme for the polysymplectic formalism due to Bridges, which allows us to link Euclidean field theory with hyperk\"ahler Floer theory.  As a concrete result, we prove a cuplength estimate. 
\end{abstract}

\tableofcontents

\section{Introduction}
This article provides a link between two research areas that - to our knowledge - were previously seen as separate, namely {Euclidean field theory} and {hyperk\"ahler Floer theory}. This connection will be made through what we called {Bridges regularization} in our previous article \cite{brilleslijper2023regularized}. Linking these areas will allow us to prove a cuplength result for Laplace's equation with non-linearity (\cref{cor:LaplaceCuplength}). The added value of this paper is to provide a setup in which we can reap the benefits of the results of the established hyperk\"ahler Floer theory from \cite{hohloch2009hypercontact} {and their generalization in \cite{ginzburg2012hyperkahler,ginzburg2013arnold}} for use in field theories, and vice versa provide a natural class of examples for the former coming from physics.\footnote{Note that \cite{hohlochinfdim} already pointed out a link between hyperk\"ahler Floer theory and infinite-dimensional Hamiltonian systems.} Let us start by briefly outlining what we mean with the terms Euclidean field theory and Bridges regularization. 

The version of {field theory} we consider in this paper is \emph{Euclidean field theory}. The simplest way of viewing this is as a version of field theory on Euclidean rather than Minkowski space. In the physics community it is used in both the study of quantum field theory and statistical mechanics. For us, the Euclidean metric makes for better analytical properties, since the field equations become elliptic instead of hyperbolic. While the basic example in (Minkowski) field theory is the wave equation with non-linearity, in the Euclidean theory this would turn into Laplace's equation with non-linearity. Under suitable conditions, classical field theory can be recovered from the Euclidean version by taking the time-variable to be imaginary, called a Wick rotation. For an introduction to Euclidean field theory we refer to \cite{euclideanfieldtheory}. In this article we will mainly focus on the simplest equation from Euclidean field theory: the Laplace equation with non-linearity (see \cref{eq:Laplace} below).

The term \emph{Bridges regularization} was introduced in \cite{brilleslijper2023regularized}. It was first noticed in \cite{bridgesderks} that there is a problem with the standard coordinate-symmetric way of formulating the wave equation as a Hamiltonian equation. The operator involved has an infinite-dimensional kernel and lacks a good analytical framework. In that article the equations were upgraded by the introduction of an extra variable and an extra constraint equation. Later, Bridges worked out the general structure behind this regularization procedure that works both for Riemannian and pseudo-Riemannian metrics (\cite{bridgesTEA}). In \cref{sec:hamformalism} we elaborate more about the issues with the standard coordinate-symmetric Hamiltonian formulation of field theory and outline Bridges' regularization process for the Laplace equation. In chapter 3 of \cite{brilleslijper2023regularized} the analogous construction for the (1+1)-dimensional wave equation can be found. 

\subsection{Motivation from classical mechanics}
We start with \textit{Newton's equation}
\begin{align}\label{eq:Newton}
    \frac{d^2}{dt^2}q(t) = -V_t'(q(t)),
\end{align}
where $q:\R\to Q$ denotes the position of a particle moving in some potential $V_t:Q\to \R$ on a manifold $Q$. This equation can be rewritten as a first-order system with the help of a Hamiltonian on the cotangent space $T^*Q$ of $Q$. Let $u=(q,p)\in T^*Q$ denote the coordinates on the cotangent bundle and $H_t(u)=\frac{1}{2}|p|^2+V_t(q)$. Then Newton's equation is equivalent to \textit{Hamilton's equation}
\begin{align}\label{eq:Hamilton}
  J  \frac{d}{dt}u(t) = \nabla H_t(u(t)), 
\end{align}
where $J=\begin{pmatrix}0&-\Id\\\Id&0\end{pmatrix}$ is the standard complex structure on $T^*Q$ and $u:\R\to T^*Q$. Floer theory can be used to prove the existence of periodic solutions for these systems. The crucial observation is that periodic solutions of Hamilton's equation are in one-to-one correspondence with critical points of the action functional
\begin{align*}
    \A_{H}(u) &= \int_{S^1}(u^*\lambda)-\int_0^1 H_t(u(t))\,dt,
\end{align*}
where $\lambda=p\,dq$ is the tautological one-form on $T^*Q$. A classical result in Floer theory states that the number of critical points of this functional is bounded below by the cuplength of the free loop space $\Lambda Q:=C^\infty(S^1,Q)$, yielding a lower bound on the number of periodic solutions to Newton's equation. For a detailed explanation of Floer theory on the cotangent bundle we refer to \cite{cieliebak1994pseudo, abbondandolo2006floer} and the fundamentals of Floer theory on closed manifolds can be found in \cite{audindamian}.

Generalizing from classical mechanics to Euclidean field theory, the time manifold becomes multi-dimensional and Newton's \cref{eq:Newton} turns into the \emph{Laplace equation}
\begin{align}\label{eq:Laplace}
    \Delta q(t):= \sum_{j=1}^n \partial_{j}^2 q(t) = -V_t'(q(t)),
\end{align}
where $t=(t_1,\ldots,t_n)$ and $\del_j=\frac{\del}{\del t_j}$. The main goal of the present article is to establish a link between cuplength results for periodic solutions of \cref{eq:Laplace} and Hamiltonian methods. While Morse theoretic methods have already been applied to semilinear elliptic equations (e.g. \cite{smale2000morse}), it is not known yet what analogue of Hamilton's \cref{eq:Hamilton} can be used to formulate a generalization of Floer's methods to prove this. Our main \cref{thm:main} states a more general result. The straightforward \cref{cor:LaplaceCuplength} gives a lower bound on the number of periodic solutions to \cref{eq:Laplace}. Along the way we will see a surprising connection to the established version of Floer theory on hyperk\"ahler manifolds. In order to make this connection, we will focus on the Laplace equation in 3 dimension (i.e. $n=3$ in the equation above). Moreover, we will only deal with flat tori as target manifolds here. Thus, in this article we will deal with maps $q:\T^3\to\T^d$ for some positive integer $d$.

\subsection{Two Hamiltonian formalisms}\label{sec:hamformalism}
\subsubsection*{First approach}
Before we can turn to the Floer theory, first a Hamiltonian formalism for Laplace's equation needs to be established. A first approach to this would be to introduce momentum variables $p_j=\partial_{t_j}q$, which turns \cref{eq:Laplace} into the first-order system
\begin{alignat}{3}
    -&\partial_1 p_1 -&\partial_2 p_2 -&\partial_3 p_3 &&= V_t'(q)\nonumber\\
    &\partial_1 q & & &&=p_1\nonumber\\
    & &\partial_2 q & &&=p_2\nonumber\\
    & & &\partial_3 q &&=p_3.\nonumber
\end{alignat}
Using the notation $u=(q,p_1,p_2,p_3)$, this system can be rewritten as
\begin{align}\label{eq:DW}
    K_1\partial_1 u(t) + K_2\partial_2 u(t) + K_3 \partial_3 u(t) = \nabla H_t(u(t)), 
\end{align}
where $H_t(u)=\frac{1}{2}p_1^2+\frac{1}{2}p_2^2+\frac{1}{2}p_3^2+V_t(q)$ and
\begin{align*}
     K_1 = \begin{pmatrix}0&-1&0&0\\1&0&0&0\\0&0&0&0\\0&0&0&0\end{pmatrix} && K_2 = \begin{pmatrix}0&0&-1&0\\0&0&0&0\\1&0&0&0\\0&0&0&0\end{pmatrix} && K_3 = \begin{pmatrix}0&0&0&-1\\0&0&0&0\\0&0&0&0\\1&0&0&0\end{pmatrix}.
\end{align*}
This equation lives in the realm of polysymplectic geometry. Defining $\omega^{K_i}=\langle\cdot,K_i\cdot\rangle$ for $i=1,2,3$ we note that the $\omega^{K_i}$ are closed 2-forms satisfying $\bigcap\ker\omega^{K_i}=0$ and therefore define a polysymplectic structure. \Cref{eq:DW} is the standard way of formulating the Laplace equation in the polysymplectic language. For an explanation of polysymplectic geometry we refer to \cite{gunther1987polysymplectic}. The equations from that article under theorem 3.2 are the same as \cref{eq:DW}. A shorter explanation of the main concepts that we need from polysymplectic geometry can be found in chapter 4 of \cite{brilleslijper2023regularized}.

The main difference between \cref{eq:Hamilton,eq:DW} is that the operator $J\frac{d}{dt}$ from \cref{eq:Hamilton} turns into the operator $K_\partial=K_1\partial_1  + K_2\partial_2 + K_3 \partial_3$. Even though periodic solutions to \cref{eq:DW} can still be written as critical points of an appropriate action functional, this equation is has some major disadvantages over \cref{eq:Hamilton}. First of all, we started with an elliptic \cref{eq:Laplace}, whereas \cref{eq:DW} is \emph{not elliptic} anymore. This can be seen by noting that the symbol of $K_\del$ always has a kernel. As Floer theory relies heavily on ellipticity this is undesirable. Related to the ellipticity is the fact that $K_\del$ does not square to the negative Laplacian, in contrast with $(J\frac{d}{dt})^2=-\frac{d^2}{dt^2}$ in the setting of Newton's equation. Moreover, $K_\partial$ has an \emph{infinite-dimensional kernel} on the space of periodic maps, in contrast to $J\frac{d}{dt}$ which has only the constant functions in the kernel. For example $(0,\del_2\psi,-\del_1\psi,0)$ and $(0,\del_3\psi,0,-\del_1\psi)$ lie in the kernel of $K_\del$ for any periodic function $\psi=\psi(t_1,t_2,t_3)$. A similar argument as made in the previous article of the authors \cite[Section 2.3]{brilleslijper2023regularized} shows that Floer curves of finite energy associated to \cref{eq:DW} do not necessarily converge to periodic solutions. Therefore, this Hamiltonian formalism is not suitable for the goal of this paper.
\begin{remark}
    Note that in \cite{brilleslijper2023regularized} the authors focused on the wave equation rather than Laplace's equation. Therefore the signs differ from the present article. The underlying geometry in this paper is Riemannian whereas the previous paper dealt with Lorentzian metrics. Moreover, this paper focuses on $n=3$ whereas the previous paper dealt with $n=2$.
\end{remark}

\subsubsection*{Second approach}
As the Hamiltonian formalism above is not suitable for developing Floer theoretic methods to prove a cuplength result, we need to alter the setup. Fortunately, a method from Bridges (see \cite{bridgesTEA}) that we call \textit{Bridges regularization}, allows us to enhance the Hamiltonian structures above to a more workable formalism, similar to what is done in chapter 3 of \cite{brilleslijper2023regularized}. In this section we will sketch the main ideas, whereas the necessary details will follow in the next section.

Note that in classical mechanics the position of a particle can be thought of as a 0-form and its momentum as a 1-form. As the time-manifold is 1-dimensional, there are no higher forms. Moving to the setup of Euclidean field theory explained above, the $q$ and $p$-variables can still be seen as 0-forms and 1-forms respectively. However the time-manifold is higher-dimensional now and therefore higher forms come into the picture (see below). The source of the problems with \cref{eq:DW} outlined above lie in the neglect of these higher forms. The idea of Bridges regularization is to take these higher forms into account as well. This is achieved by generalizing the operator $J\frac{d}{dt}$ from \cref{eq:Hamilton} to more dimensions, rather than just defining multiple momenta. 

The operator $J\frac{d}{dt}$ can be written as the sum of the differential and codifferential on the total exterior algebra of time $\bigwedge T^*S^1$. When generalizing to a three-dimensional time manifold we can still define the differential and codifferential on the total exterior algebra $\bigwedge T^*\T^3$. Note that this is an 8-dimensional space. Their sum yields the operator
\begin{align*}
    J_\partial = d + d^* = J_1\partial_1 + J_2\partial_2 + J_3\partial_3, 
\end{align*}
where\footnote{See \cite{bridgesTEA} paragraph 5.}
\begin{align}\label{eq:Ji3}
\begin{split}
    J_1 = 
    \begin{pmatrix}
    \textcolor{purple}0&\textcolor{purple}{-1}&\textcolor{purple}0&\textcolor{purple}0&0&0&0&0\\
    \textcolor{purple}1&\textcolor{purple}0&\textcolor{purple}0&\textcolor{purple}0&0&0&0&0\\
    \textcolor{purple}0&\textcolor{purple}0&\textcolor{purple}0&\textcolor{purple}0&0&0&-1&0\\
    \textcolor{purple}0&\textcolor{purple}0&\textcolor{purple}0&\textcolor{purple}0&0&1&0&0\\
    0&0&0&0&0&0&0&-1\\
    0&0&0&-1&0&0&0&0\\
    0&0&1&0&0&0&0&0\\
    0&0&0&0&1&0&0&0
    \end{pmatrix},\\
    J_2 =
    \begin{pmatrix}
    \textcolor{purple}0&\textcolor{purple}0&\textcolor{purple}{-1}&\textcolor{purple}0&0&0&0&0\\
    \textcolor{purple}0&\textcolor{purple}0&\textcolor{purple}0&\textcolor{purple}0&0&0&1&0\\
    \textcolor{purple}1&\textcolor{purple}0&\textcolor{purple}0&\textcolor{purple}0&0&0&0&0\\
    \textcolor{purple}0&\textcolor{purple}0&\textcolor{purple}0&\textcolor{purple}0&-1&0&0&0\\
    0&0&0&1&0&0&0&0\\
    0&0&0&0&0&0&0&-1\\
    0&-1&0&0&0&0&0&0\\
    0&0&0&0&0&1&0&0
    \end{pmatrix}, \\
    J_3 = 
    \begin{pmatrix}
    \textcolor{purple}0&\textcolor{purple}0&\textcolor{purple}0&\textcolor{purple}{-1}&0&0&0&0\\
    \textcolor{purple}0&\textcolor{purple}0&\textcolor{purple}0&\textcolor{purple}0&0&-1&0&0\\
    \textcolor{purple}0&\textcolor{purple}0&\textcolor{purple}0&\textcolor{purple}0&1&0&0&0\\
    \textcolor{purple}1&\textcolor{purple}0&\textcolor{purple}0&\textcolor{purple}0&0&0&0&0\\
    0&0&-1&0&0&0&0&0\\
    0&1&0&0&0&0&0&0\\
    0&0&0&0&0&0&0&-1\\
    0&0&0&0&0&0&1&0
    \end{pmatrix}.
    \end{split}
\end{align}
Here, $J_\partial$ acts on maps $Z:\T^3\to\bigwedge T^*\T^3$ and the identification $\bigwedge T_t^*\T^3\cong \R^8$ comes from taking coordinates $Z=(q,p_1,p_2,p_3,o_{23},o_{31},o_{12},o_{123})$ where
\begin{align*}Z=q&+p_1\,dt_1+p_2\,dt_2+p_3\,dt_3+o_{23}\,dt_2\wedge dt_3+o_{31}\,dt_3\wedge dt_1+o_{12}\,dt_1\wedge dt_2\\&+o_{123}\,dt_1\wedge dt_2\wedge dt_3.\end{align*}
Here, the $q$ and $p$-coordinates denote the 0- and 1-forms just like before, whereas the $o$-coordinates signify the higher forms that appear. 
Note that the top left $4\times 4$ minors of the $J_i$ form the matrices $K_i$ (depicted in purple). This means that when restricting to the space of 0- and 1-forms, we recover the polysymplectic operators from before. 

One can now study the new Hamilton equation
\begin{align}\label{eq:Jdel}
    J_\partial Z(t) = \nabla H_t(Z(t)),
\end{align}
as before, where we take $H_t(Z)=\frac{1}{2}p_1^2+\frac{1}{2}p_2^2+\frac{1}{2}p_3^2+\frac{1}{2}o_{123}^2+V_t(q)$. A straightforward calculation shows that the $q$-variable then still satisfies Laplace's \cref{eq:Laplace}. The reason for including the term $\frac{1}{2}o_{123}^2$ into the Hamiltonian is that the variables $o_{12},o_{23},o_{31}$ will now also satisfy Laplace's equation (without non-linearity). See \cref{ex:Laplace} for the explicit equations. 

\Cref{eq:Jdel} lives in the polysymplectic world as well. Just as before, we define the closed 2-forms $\omega^{J_i}=\langle\cdot,J_i\cdot\rangle$ for $i=1,2,3$. Again their kernels have trivial intersection so that they induce a polysymplectic structure. However, in this case something stronger holds: the forms $\omega^{J_i}$ are non-degenerate and therefore define symplectic structures as well. This contrasts the forms $\omega^{K_i}$ from before, who were only presymplectic. Precisely this fact solves the problems that we had with \cref{eq:DW} and makes this Hamiltonian formalism far better suited for Floer-theoretic methods. Note that \cref{eq:Jdel} is elliptic, that $(J_\del)^2=-\Id\otimes \Delta$ and that the kernel of $J_\partial$ consists only of constant functions when acting on the space of periodic functions. 

Additionally, the matrices $J_i$ contain even more structure. They satisfy the relations
\begin{align}\label{eq:JiRelations}
    J_i^2 = -\Id && J_jJ_i+J_iJ_j = 0 && J_i^T=-J_i,
\end{align}
for all $i\neq j$ and $1\leq i,j \leq 3$. This means that $\bigwedge T_t^*\T^3$ equipped with the complex structures $J_1J_2,J_2J_3,J_3J_1$ becomes a hyperk\"ahler space. Moreover, the 2-forms $\omega^{J_i}$ form a so-called \emph{Clifford symplectic pencil}, as defined in \cite{ginzburg2013arnold}. \Cref{eq:Jdel} falls precisely under the type of equations studied by Ginzburg and Hein in \cite{ginzburg2012hyperkahler}, where they generalize the results of \cite{hohloch2009hypercontact} obtained using hyperk\"ahler Floer theory. Therefore, the Hamiltonian formalism used in this paragraph links our problem to the existing theory studied in \cite{hohloch2009hypercontact, ginzburg2012hyperkahler,ginzburg2013arnold}. This surprising connection allows us to use the analysis from those articles to prove results about Laplace's equation. Note that the main difference with the present article is that \cite{hohloch2009hypercontact,ginzburg2012hyperkahler,ginzburg2013arnold} study closed target manifolds, whereas our case generalizes the non-compact cotangent bundle setting of \cref{eq:Hamilton}. 

\begin{remark}\label{rem:HNSvsGH}
    The construction in this paragraph is not restricted to 3-dimensional time manifolds. In fact, it can be extended to any number of time coordinates. The reason to stick with 3 dimensions in this article is that it already showcases all the interesting parts of the theory, whereas in higher dimensions the formulas become complicated to write out. On a more philosophical note, the study of maps on a 3-dimensional manifold originated the search for solutions of equations like \cref{eq:Jdel} in \cite{hohloch2009hypercontact}. Again the main difference with our setting lies in the non-compactness of the target space. 
\end{remark}

\subsection{Outline of the article}
This article is structured as follows. \Cref{sec:setting} gives more details on the setting. After introducing it for general dimension $n$ of the domain, \cref{sec:n3} zooms back in to the case $n=3$ and consequently the main theorem of this paper is formulated. In \cref{sec:C0}, the $C^0$-bounds are discussed that allow us to deal with the non-compactness of the spaces involved. This finishes the proof of our main theorem. Finally, \cref{sec:outlook} outlines an isomorphism between an appropriate generalization of hyperk\"ahler Floer homology along the lines of \cite{ginzburg2012hyperkahler,ginzburg2013arnold} and the Morse homology of a functional whose critical points are solutions to the Laplace equation (see \cref{sec:Morse}). This is ongoing research of the authors.

\section{The setting}\label{sec:setting}

The setting in which we will work is a generalization of the scalar Euclidean field theory described above. We will work with maps $q:\T^n\to\T^d$ between flat tori. As mentioned above, we will focus on the case $n=3$, but the setting can be introduced for general $n$. 

As flat tori arise as quotients of Euclidean space, we will start by defining the covering space of the manifold that we will work with. Let
\begin{align*}
    \bar{B}&=\R^d\otimes \bigwedge(\R^n)^*\\
    &=\R^d\otimes\left(\bigwedge\nolimits^0(\R^n)^*\oplus\bigwedge\nolimits^1(\R^n)^*\oplus\bigoplus_{k=2}^n\bigwedge\nolimits^k(\R^n)^*\right).
\end{align*}
We denote elements $\bar{Z}\in\bar{B}$ as $\bar{Z}=(\bar{q},\bar{p},\bar{o})$, where
\begin{align*}
    \bar{q}&=(\bar{q}^\alpha)_{\alpha}\\
    \bar{p}&=(\bar{p}^\alpha_i)_{\alpha,i}\\
    \bar{o}&=(\bar{o}^\alpha_{i_1,\ldots,i_k})_{k,\alpha,i_1,\ldots,i_k}
\end{align*}
We always take $\alpha=1,\ldots,d$, $i=1,\ldots,n$ and $k=2,\ldots,n$. Note that $\bar{q}$ and $\bar{p}$ denote the 0- and 1-forms respectively and $o$ denotes the collection of higher degree forms. We will also write $\bar{Z}^\even$ and $\bar{Z}^\odd$ for the collection of even- and odd-degree forms respectively. 

The group $\Z^{m}$ acts on the even-degree forms by translation, where $m=2^{n-1}d$. We let $B$ denote the quotient by this action. Thus 
\begin{align*}
    B &= \T^{m}\times \R^m
\end{align*}
with elements $Z=(Z^\even,Z^\odd)$. We will still denote the elements of $B$ also as $Z=(q,p,o)$ where these coordinates are given by the projections of the corresponding coordinates on $\bar{B}$. Note that $TB=B\times\bar{B}$.

Equip $\bar{B}$ with the standard Euclidean metric. Then we can define the operator \[\bar{J}_\del:=\Id_{\R^d}\otimes(d+d^*):C^\infty(\T^n,\bar{B})\to C^\infty(\T^n,\bar{B}),\]
where $d$ and $d^*$ represent the differential and codifferential respectively, seen as operators on sections of the total exterior algebra. As $\ker(\bar{J}_\del)$ contains the constant maps, this operator descends to an operator $J_\del:C^\infty(\T^n,B)\to C^\infty(\T^n,\bar{B})$. We define the matrices $J_i$ for $i=1,\ldots,n$ by
\[J_\partial = \sum_i (\Id_d\otimes J_i)\partial_i\]
and the 2-forms $\omega^{J_i}=\langle\cdot,(\Id_d\otimes J_i)\cdot\rangle$. From the observation that $(\bar{J}_\del)^2=-\Id_{\bar{B}}\otimes\Delta$ it follows that the $J_i$ satisfy the relations (\ref{eq:JiRelations}) and therefore the $\omega^{J_i}$ define symplectic forms. For $n=3$, $(B,J_1J_2,J_2J_3,J_3J_1)$ has the structure of a hyperk\"ahler manifold and in general the $\omega^{J_i}$ form a Clifford symplectic pencil (see \cite{ginzburg2013arnold}). 

\begin{remark}
\begin{enumerate}[label=(\roman*)]
    \item Note that for $n=1$ we have $B=\T^d\times\R^d = T^*\T^d$ so that $B$ can be viewed as a generalization of the cotangent bundle to this setting. Also, in this case $J_\partial=J\frac{d}{dt}$ and $\omega^{J_1}=dp\wedge dq$ is the standard symplectic form. 
    \item In the construction of $B$, the choice of quotienting out by an action on the even-degree forms only might seem arbitrary. In \cref{sec:Morse} the motivation for this choice becomes clear. In essence, the set of even-degree forms is Lagrangian with respect to any of the symplectic structures $\omega^{J_i}$. Eventually, we want to prove a relation between hyperk\"ahler Floer theory and Morse theory for the Laplace equation. In order to do this we need to study Floer curves with boundary on a compact Lagrangian (which will be the quotient of the even-degree forms by the $\Z^m$-action).
    On a more conceptual level, the equations that we will study (\ref{eq:critpts}) have a certain symmetry between the $q$-coordinates and the higher even-degree coordinates (see \cref{ex:Laplace}). As we want to study maps $q:\T^n\to\T^d$ it is logical to also ask the other even-degree coordinates to have tori as codomains.
\end{enumerate}
\end{remark}

As the operator $d+d^*$ maps even-degree forms to odd ones and vice versa, we can write the $\omega^{J_i}$ as sums of terms of the form $\pm d\eta^\odd\wedge d\eta^\even=\pm d(\eta^\odd d\eta^\even)$, where $\eta^\odd$ and $\eta^\even$ are odd- and even-degree forms respectively. Thus, the $\omega^{J_i}$ are exact and we can write $\omega^{J_i}=d\theta_i$. For notational convenience we will not write out these forms in the general case, but for $n=3$ the explicit formulas are given in \cref{eq:omegas}. 

We can now define an action functional (see also \cite{ginzburg2013arnold}) on the space $\Lambda$ of null-homotopic maps $Z:\T^n\to B$ by
\begin{align*}
    \A(Z)&=\sum_i\int_{\T^n}Z^*\theta_i\wedge\iota_{\partial_i}\dV\\
    &=\sum_i\int_{\T^n}\theta_i(\del_i Z)\dV,
\end{align*}
where $\dV=dt_1\wedge\cdot\wedge dt_n$ is the volume form on $\T^n$. The differential of this functional is given by
\begin{align*}
    (d\A)_Z &= \sum_i\int_{\T^n}\omega^{J_i}(\cdot,\partial_i Z)\dV\\
    &=\sum_i\int_{\T^n}\langle\cdot,(\Id_d\otimes J_i)\partial_i Z\rangle\dV\\
    &=\int_{\T^n}\langle\cdot,J_\partial Z\rangle\dV,
\end{align*}
so that the $L^2$-gradient of $\A$ is given by $\grad\A(Z)=J_\partial Z$.

Finally, for a Hamiltonian $H:\T^n\times B\to\R$ we define the perturbed action functional
\[
\A_H(Z)=\A(Z)-\int_{\T^n}H_t(Z(t))\dV.
\]
Its critical points are given by the equation
\begin{align}\label{eq:critpts}
    J_\partial Z(t) = \nabla H_t(Z(t)).
\end{align}

\subsection{Explicit equations for $n=3$}\label{sec:n3}
For $n=3$ we have $B=\T^{4d}\times\R^{4d}=\T^d\times\R^{3d}\times\T^{3d}\times\R^{d}$, where the latter splitting is in the order of degrees of the corresponding forms in $\bar{B}$. The $J_i$ are given by \cref{eq:Ji3} and the $\omega^{J_i}$ by 
\begin{align}\label{eq:omegas}
\begin{split}
    \omega^{J_1} &= \sum_\alpha\left( dp_1^\alpha\wedge dq^\alpha- do_{31}^\alpha\wedge dp_3^\alpha+ do_{12}^\alpha\wedge dp_2^\alpha + do_{123}^\alpha\wedge do_{23}^\alpha\right)\\
    &=d\left(\sum_\alpha\left( p_1^\alpha dq^\alpha+ p_3^\alpha do_{31}^\alpha- p_2^\alpha do_{12}^\alpha + o_{123}^\alpha do_{23}^\alpha\right)\right)\\
    \omega^{J_2} &= \sum_\alpha\left( dp_2^\alpha\wedge dq^\alpha+ do_{23}^\alpha\wedge dp_3^\alpha- do_{12}^\alpha\wedge dp_1^\alpha + do_{123}^\alpha\wedge do_{31}^\alpha\right)\\
    &= d\left(\sum_\alpha\left( p_2^\alpha dq^\alpha- p_3^\alpha do_{23}^\alpha+ p_1^\alpha do_{12}^\alpha + o_{123}^\alpha do_{31}^\alpha\right)\right)\\
    \omega^{J_3} &= \sum_\alpha\left( dp_3^\alpha\wedge dq^\alpha- do_{23}^\alpha\wedge dp_2^\alpha+ do_{31}^\alpha\wedge dp_1^\alpha + do_{123}^\alpha\wedge do_{12}^\alpha\right)\\
    &= d\left(\sum_\alpha\left( p_3^\alpha dq^\alpha+ p_2^\alpha do_{23}^\alpha- p_1^\alpha do_{31}^\alpha + o_{123}^\alpha do_{12}^\alpha\right)\right).
\end{split}
\end{align}
\Cref{eq:critpts} becomes the following system of equations:
\begin{align}
 \tag{Q}   -\del_1p_1^\alpha-\del_2p_2^\alpha-\del_3p_3^\alpha &= \del_{q^\alpha} H\\
 \tag{P1}   \del_1 q^\alpha+\del_2 o_{12}^\alpha-\del_3 o_{31}^\alpha &= \del_{p_1^\alpha}H\\
 \tag{P2}   -\del_1o_{12}^\alpha+\del_2q^\alpha+\del_3o_{23}^\alpha &= \del_{p_2^\alpha}H\\
 \tag{P3}   \del_1o_{31}^\alpha-\del_2o_{23}^\alpha+\del_3q^\alpha &= \del_{p_3^\alpha}H\\
 \tag{O23}   -\del_1o_{123}^\alpha+\del_2p_3^\alpha-\del_3p_2^\alpha &= \del_{o_{23}^\alpha}H\\
 \tag{O31}   -\del_1p_3^\alpha-\del_2o_{123}^\alpha+\del_3p_1^\alpha &= \del_{o_{31}^\alpha}H\\
 \tag{O12}   \del_1p_2^\alpha-\del_2p_1^\alpha-\del_3o_{123}^\alpha &= \del_{o_{12}^\alpha}H\\
 \tag{O123}   \del_1o_{23}^\alpha+\del_2o_{31}^\alpha+\del_3o_{12}^\alpha &=\del_{o_{123}^\alpha}H.
\end{align}

\begin{example}\label{ex:Laplace}
    Our main example will be Hamiltonians of the form
    \[H_t(Z)=\frac{1}{2}\left(\sum_i p_i^2 +o_{123}^2\right)+V(q)=\frac{1}{2}|Z^\odd|^2+V(q).\]
    By filling in equations (P1-3) into (Q), we find that
    \begin{align}\label{eq:LaplaceSystem}
    -\Delta q^\alpha = \del_{q^\alpha}V(q),
    \end{align}
    so the $q$-coordinates solve a (coupled) system of Laplace equations with non-linearity. Moreover, when we fill in equations (P1-3) and (O123) into (O23),(O31) and (O12) we respectively find the following three equations:
    \begin{align}\label{eq:LaplaceO}\begin{split}
        -\Delta o_{23}^\alpha &= 0\\
        -\Delta o_{31}^\alpha &= 0\\
        -\Delta o_{12}^\alpha &= 0.\end{split}
    \end{align}
    As Laplace's equation without non-linearity only has constant solution on a periodic domain, we see that there is a 1-1 correspondence between solutions of \cref{eq:LaplaceSystem} and $\T^{3d}$-families of solutions of Bridges' equations (\ref{eq:critpts}).
\end{example}

\subsection{Main theorem}
The main theorem of this article gives a lower bound for the number of periodic solutions of \cref{eq:critpts}.
\begin{theorem}\label{thm:main}
    Let $n=3$ and $H_t(Z) = \frac{1}{2}|Z^\odd|^2+W_t(q,p)$, where $W$ is smooth and has finite $C^2$-norm. Then there are at least $d+1$ different $\T^{3d}$-families of solutions $Z:\T^3\to B$ to the equation
    \begin{align}\label{eq:hyperkahler}
    J_1\del_1 Z(t)+J_2\del_2 Z(t)+ J_3\del_3 Z(t) = \nabla H_t(Z(t)).
    \end{align}
\end{theorem}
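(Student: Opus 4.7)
The proof proceeds by reducing \cref{eq:hyperkahler} to the framework of hyperk\"ahler Floer theory established in \cite{hohloch2009hypercontact, ginzburg2012hyperkahler, ginzburg2013arnold}. The first step is to recognize that \cref{eq:hyperkahler} is the critical point equation $\grad \A_H(Z) = 0$ for the perturbed action functional on null-homotopic loops $Z:\T^3\to B$, so the theorem is equivalent to a lower bound on the number of $\T^{3d}$-orbits of critical points of $\A_H$. Since $(B,J_1J_2,J_2J_3,J_3J_1)$ is hyperk\"ahler and the $\omega^{J_i}$ form a Clifford symplectic pencil, the statement we want is exactly of the type proved by Ginzburg--Hein, except that their target must be closed while our $B=\T^{4d}\times\R^{4d}$ is non-compact.

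To bridge this gap, I would first show that all solutions of \cref{eq:hyperkahler} with the given Hamiltonian lie in a bounded subset of the $\R^{4d}$ factor. This is the role of \cref{sec:C0}: the specific form $H_t(Z)=\tfrac12|Z^{\odd}|^2+W_t(q,p)$ with $\|W\|_{C^2}<\infty$ allows one to use that the odd-degree part of the equation reads, roughly, $J_\del Z=Z^{\odd}+\nabla W(q,p)$, i.e.\ the ``momentum'' variables can be expressed linearly in terms of derivatives of the ``position'' variables plus a bounded error. Combined with the identity $J_\del^2=-\Id\otimes\Delta$ (so that $q$ satisfies a semilinear elliptic equation with bounded nonlinearity) and standard elliptic estimates on $\T^3$, one extracts uniform $C^0$-bounds on $Z^{\odd}$. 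Once such a-priori bounds are available, one cuts off $H$ outside a large ball in the $\R^{4d}$-directions and replaces $B$ by a compact hyperk\"ahler manifold (e.g.\ by replacing each $\R$-factor by a large torus carrying the flat Clifford pencil) without losing any solutions, bringing us into the setting of \cite{ginzburg2012hyperkahler, ginzburg2013arnold}.

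Next I would invoke the Ginzburg--Hein cuplength estimate for hyperk\"ahler Floer theory: the number of critical points of $\A_H$ on the compactified target is bounded below by $\operatorname{cl}(\T^{4d})+1=4d+1$. Since $W$ is independent of the higher even-degree coordinates $(o_{23},o_{31},o_{12})$, translation in the corresponding $\T^{3d}$-factor of the even torus acts freely on the solution set and preserves $\A_H$; hence solutions come in $\T^{3d}$-orbits, and the number of distinct orbits is bounded below by the cuplength of the quotient $\T^{4d}/\T^{3d}=\T^d$, namely $d+1$. Either an equivariant version of the cuplength bound, or (equivalently) the Morse--Bott framework of Ginzburg--Hein applied to the $\T^{3d}$-clean critical manifolds, yields the required $d+1$ families.

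The heart of the matter is the $C^0$-bound step. Everything downstream is an appeal to established machinery, but the non-compactness of $B$ in the $\R^{4d}$-directions is genuinely dangerous: without an a-priori bound on the $Z^{\odd}$-components, neither the compactification nor the Ginzburg--Hein cuplength estimate applies. The interplay between the linear ``momentum'' coupling produced by the $\tfrac12|Z^{\odd}|^2$-term and the ellipticity of $J_\del$, together with the $C^2$-boundedness of $W$, is what makes the a-priori bound feasible, and extracting it cleanly is the main technical content of the proof.
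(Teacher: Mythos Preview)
Your high-level strategy matches the paper's: obtain $C^0$-bounds on solutions to handle the non-compactness of $B$, then feed into the Ginzburg--Hein cuplength machinery modulo the $\T^{3d}$-symmetry. The quotient step is exactly as in the paper (descend $\A_H$ to $\Lambda/\T^{3d}$, whose reduced constant torus is $\T^d$, giving cuplength $d$). Two points, however, deserve correction.

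First, your sketch of the $C^0$-bounds does not work as written. The claim that ``$q$ satisfies a semilinear elliptic equation with bounded nonlinearity'' holds only for the special Laplace Hamiltonian of \cref{ex:Laplace}; for general $W_t(q,p)$, substituting the (P$i$)-equations into (Q) gives $\Delta q = -\partial_q W + \sum_i\partial_i(\partial_{p_i}W)$, and the last term involves $\nabla p$, so the equation does not close and no direct elliptic estimate is available. The paper's argument is different and crucially uses the action: it restricts to sublevel sets $\mathcal P_a(H)$, observes that the Liouville-type vector field $\xi=\sum p_i\partial_{p_i}+o_{123}\partial_{o_{123}}$ satisfies $\iota_\xi\omega^{J_i}=\theta_i$, so that $\A(Z)=\int dH_Z(\xi)$, and then the bound $\A_H(Z)\le a$ together with the quadratic growth of $H$ in $Z^{\odd}$ yields $\|Z^{\odd}\|_{L^2}\le C(a)$. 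Only after this does bootstrapping via $\|J_\partial Z\|_{H^k}=\|\nabla Z\|_{H^k}$ produce the $H^2$- and hence $L^\infty$-bounds. Second, the paper does not compactify $B$ to a large torus (the quadratic term $\tfrac12|Z^{\odd}|^2$ would not descend to such a quotient); it instead cuts off $W$ in the $Z^{\odd}$-variable to get $\bar H^\rho$, equal to $H$ where all orbits of action $\le a$ live and equal to $\tfrac12|Z^{\odd}|^2$ outside, and then invokes the non-compact extension of Ginzburg--Hein (remark~2.6 of \cite{ginzburg2012hyperkahler}) directly, supplemented by a maximum-principle bound on Floer curves (\cref{lem:floerbounds}).
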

The following corollary follows immediately from the discussion in \cref{ex:Laplace}.
\begin{corollary}\label{cor:LaplaceCuplength}
    For $n=3$, there are at least $d+1$ different solutions to the system of Laplace equations with non-linearity given in \cref{eq:LaplaceSystem}, provided $V$ is smooth and has finite $C^2$-norm. 
\end{corollary}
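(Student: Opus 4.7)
My plan is to realize the solutions of (\ref{eq:hyperkahler}) as the critical points of the action functional $\A_H$ from \cref{sec:setting} and then to import the cuplength estimate from the hyperk\"ahler Floer theory of Hohloch-Noetzel-Salamon \cite{hohloch2009hypercontact} and its generalization by Ginzburg-Hein \cite{ginzburg2012hyperkahler,ginzburg2013arnold}, after taming the non-compactness of $B$ via the $C^0$-bounds advertised in \cref{sec:C0}. The computation in \cref{sec:setting} already identifies critical points of $\A_H$ with solutions of (\ref{eq:critpts}), which for $n=3$ is precisely (\ref{eq:hyperkahler}); by the quaternionic relations (\ref{eq:JiRelations}), the triple $(J_1,J_2,J_3)$ makes $B$ a flat hyperk\"ahler manifold and the $\omega^{J_i}$ form a Clifford symplectic pencil in the sense of \cite{ginzburg2013arnold}, so $\A_H$ falls into exactly the class of functionals for which a Floer complex and a cuplength estimate are constructed in \cite{ginzburg2012hyperkahler,ginzburg2013arnold}.

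Next, I would account for the $\T^{3d}$-symmetry that appears in the statement. Because $W_t$ depends only on $(q,p)$, the Hamiltonian $H$ is independent of $o_{23},o_{31},o_{12}$, and inspection of the equations in \cref{sec:n3} shows that these variables enter the system only through their partial derivatives. Consequently, the simultaneous translation $o_{ij}\mapsto o_{ij}+c_{ij}$ by any constant vector sends solutions to solutions, producing a free $\T^{3d}$-action on the critical set of $\A_H$, so critical points automatically organize into $\T^{3d}$-families. The expected lower bound of $d+1$ then corresponds to the cuplength-plus-one of $\T^d$, the factor that remains after quotienting, and should emerge from the cuplength inequality of \cite{ginzburg2012hyperkahler,ginzburg2013arnold} applied to $\A_H$.

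The main obstacle is that these references treat closed target manifolds, whereas our $B=\T^{4d}\times\R^{4d}$ is noncompact, so their argument does not transfer off the shelf. What is required are uniform $C^0$-bounds on both critical points of $\A_H$ and on the Floer trajectories used to define the complex, so that the relevant moduli spaces remain in a compact region of $B$; this is precisely the content of \cref{sec:C0}. I expect these bounds to exploit the special form $H=\tfrac12|Z^\odd|^2+W_t(q,p)$: the odd components appear linearly in $\nabla H$, so $Z^\odd$ is controlled pointwise by $|\nabla W|$ via the elliptic equation $J_\del Z=\nabla H_t(Z)$, while the $Z^\even$-components live in the already compact factor $\T^{4d}$. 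For Floer curves I would look for a maximum-principle style estimate controlled by the energy, again exploiting the linearity of $\nabla H$ in $Z^\odd$ and the finite $C^2$-norm of $W$. Once these $C^0$-estimates are in hand, the cuplength inequality of Ginzburg-Hein runs as in the compact case and, combined with the $\T^{3d}$-reduction above, yields the stated $d+1$ families.
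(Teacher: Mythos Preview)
Your proposal is essentially an outline of the proof of \cref{thm:main} rather than of the corollary itself, and on that count it matches the paper closely: the paper obtains \cref{thm:main} by feeding $\A_H$ into the cuplength machinery of \cite{ginzburg2012hyperkahler,ginzburg2013arnold} after handling the non-compactness of $B$ with $C^0$-bounds, just as you sketch. The paper then deduces \cref{cor:LaplaceCuplength} from \cref{thm:main} in one line via \cref{ex:Laplace}.

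That last step is what is missing from your write-up. You conclude with ``the stated $d+1$ families,'' but the corollary asks for $d+1$ solutions of the Laplace system (\ref{eq:LaplaceSystem}), not $\T^{3d}$-families of hyperk\"ahler solutions. The bridge is \cref{ex:Laplace}: for $H_t(Z)=\tfrac12|Z^\odd|^2+V(q)$ one checks that the $q$-component of any solution of (\ref{eq:hyperkahler}) satisfies (\ref{eq:LaplaceSystem}), that the $o_{ij}$ satisfy the \emph{homogeneous} Laplace equation on $\T^3$ and are therefore constant, and that $p_i,o_{123}$ are then determined by $q$. Hence each $\T^{3d}$-family collapses to a single $q$, and distinct families give distinct Laplace solutions. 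You correctly note the translation symmetry in $o_{ij}$ but never argue that the $o_{ij}$ are forced to be constant, which is precisely what turns families into individual Laplace solutions.

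One minor point on the $C^0$-bounds for orbits: your suggestion that ``$Z^\odd$ is controlled pointwise by $|\nabla W|$'' directly from $J_\del Z=\nabla H(Z)$ would not work, since $J_\del Z$ involves derivatives of $Z$ you do not yet control. The paper instead first extracts an $L^2$ bound on $Z^\odd$ from the action (pairing with the Liouville-type vector field $\xi=\sum p_i^\alpha\del_{p_i^\alpha}+\sum o_{123}^\alpha\del_{o_{123}^\alpha}$ gives $\A_H(Z)\geq h_0\|Z^\odd\|_{L^2}^2-h_1$), then bootstraps to $H^2$ and hence $L^\infty$ using $\|\nabla Z\|_{H^k}=\|J_\del Z\|_{H^k}$ and the $C^2$-bound on $W$. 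Your maximum-principle expectation for Floer curves, on the other hand, is exactly how the paper proceeds.
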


Note that versions of \cref{thm:main} have been proven in \cite{hohloch2009hypercontact,ginzburg2012hyperkahler,ginzburg2013arnold,albers2016cuplength} for closed target manifolds. The difference with the present article is the fact that we allow the $p$-variables to be unbounded, thereby establishing the connection between the hyperk\"ahler setting and Euclidean field theory. In \cref{sec:reduction} we will also discuss how to reduce our setting to the closed target manifolds studied by the authors mentioned above, in the case that there are symmetries in the system.

{In remark 2.6 of \cite{ginzburg2012hyperkahler} it is mentioned how their cuplength result extends to the non-compact setting by choosing an appropriate Hamiltonian and establishing $C^0$-bounds. This is exactly what will be done in \cref{sec:C0}. The proof of the cuplength result in \cite{ginzburg2012hyperkahler} also immediately transfers to the setting we have with $\T^{3d}$-symmetry. The action functional descends to a map $\A_H':\Lambda/\T^{3d}\to \R$. The Fourier transform can still be defined on this quotient space, where now the 0-th Fourier coefficient lies in $B/\T^{3d}$. Therefore, the proof in \cite{ginzburg2012hyperkahler} yields $d+1$ critical points of $\A_H'$, which correspond to $d+1$ $\T^{3d}$-families of critical points of $\A_H$. }

While Morse theory is used to study problems similar to \cref{cor:LaplaceCuplength} (e.g. \cite{smale2000morse}), the authors are unaware of a proof of this specific result. In any case, as pointed out in the introduction, the main added value of this paper is to show how hyperk\"ahler Floer theory connects to this problem. In an ongoing project of the authors, we study the relation between the Morse theory and the generalization of hyperk\"ahler Floer theory that can be used here. See \cref{sec:Morse} for a more detailed explanation.

\section{$C^0$-bounds}\label{sec:C0}
\subsection{Bounds on periodic orbits}
In this section we will prove \cref{lem:C0bounds}. 
Let $J_\del:C^\infty(\T^3,B)\to C^\infty(\T^3,\bar{B})$ be defined as in \cref{sec:setting} and $H:\T^3\times B\to\R$ given by $H_t(Z)=\frac{1}{2}|Z^\odd|^2+W_t(q,p)$. We denote the set of periodic solutions of \cref{eq:critpts} by $\mathcal{P}(H)=\{Z:\T^3\to B\mid J_\partial Z(t) = \nabla H_t(Z(t))\}$ and $\mathcal{P}_a(H)=\{Z\in\mathcal{P}(H)\mid a\geq\A_H(Z)\}$.
\begin{lemma}\label{lem:C0bounds}
    Let $a\in\R$ and suppose $W$ has bounded $C^2$-norm. Then $\mathcal{P}_a(H)$ is bounded in $L^\infty$.
\end{lemma}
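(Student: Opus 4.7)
The plan is to exploit the special structure of the system (Q), (P1)--(P3), (O23), (O31), (O12), (O123) to decouple the bounded and unbounded directions of $B=\T^{4d}\times\R^{4d}$. Since $q$ and the $o_{ij}$ live in the torus factor, only $p\in\R^{3d}$ and $o_{123}\in\R^d$ require genuine analysis, and I would handle them in that order using Hodge theory on $\T^3$.

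First I would show that $o_{123}\equiv 0$ for every $Z\in\mathcal{P}(H)$. Integrating (O123) over $\T^3$ annihilates the divergence-type left-hand side by periodicity and gives $\int_{\T^3}o_{123}^\alpha\dV=0$. Then, applying $\del_1$ to (O23), $\del_2$ to (O31), $\del_3$ to (O12) and adding, the mixed partials of $p$ cancel pairwise and one is left with $-\Delta o_{123}^\alpha=0$ on $\T^3$; since periodic harmonic functions are constant and the mean vanishes, $o_{123}\equiv 0$.

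With $o_{123}=0$ in hand, equations (O23), (O31), (O12) say exactly that the $\R^d$-valued $1$-form $p^\alpha=\sum_i p_i^\alpha\,dt_i$ is closed on $\T^3$ for every $\alpha$. Hodge theory lets me write $p^\alpha=\bar p^\alpha+d\phi^\alpha$ with $\bar p^\alpha=\sum_i\bar p_i^\alpha\,dt_i$ the constant harmonic part and $\phi^\alpha$ a function of mean zero. The constants $\bar p_i^\alpha$ are bounded uniformly by $\|W\|_{C^1}$: just integrate (P1)--(P3) over $\T^3$, use periodicity to kill the left-hand sides, and solve for $\bar p_i^\alpha$ in terms of $\int_{\T^3}\del_{p_i^\alpha}W\dV$. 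Substituting the Hodge decomposition into (Q) then collapses it to the scalar Poisson equation
\[
-\Delta\phi^\alpha=\del_{q^\alpha}W_t(q(t),p(t))
\]
on $\T^3$, whose right-hand side is uniformly bounded in $L^\infty$ by $\|W\|_{C^1}$.

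The last step is now routine elliptic regularity: the Calder\'on--Zygmund $L^p$-estimates for the Laplacian on $\T^3$, combined with the spectral gap on mean-zero functions, give a uniform $W^{2,p}$-bound on $\phi^\alpha$ for every $p\in(1,\infty)$; Sobolev embedding $W^{2,p}\hookrightarrow C^{1,\beta}$ for $p>3$ then upgrades this to a uniform $C^1$-bound and hence an $L^\infty$-bound on $d\phi^\alpha=p^\alpha-\bar p^\alpha$. Combined with the bound on $\bar p^\alpha$, the vanishing of $o_{123}$, and the trivial boundedness of $q$ and the $o_{ij}$, this produces the required $L^\infty$-bound on $Z$. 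The main obstacle is really conceptual rather than technical: one has to notice that the polysymplectic/hyperk\"ahler structure forces the apparently bad unbounded coordinates to satisfy a scalar Poisson equation with uniformly bounded source, after which the analysis is standard. I note in passing that the action bound $\A_H(Z)\leq a$ plays no role in the above argument, so the lemma as formulated is presumably set up with an eye on the more delicate analogous estimate for Floer trajectories, where the action bound will genuinely be needed.
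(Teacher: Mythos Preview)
Your argument is correct and in fact proves more than the lemma claims: since you never invoke the action bound, you have shown that all of $\mathcal{P}(H)$, not merely $\mathcal{P}_a(H)$, is $L^\infty$-bounded. The paper proceeds quite differently. It first uses the action bound in an essential way, introducing the Liouville-type vector field $\xi=\sum p_i^\alpha\del_{p_i^\alpha}+\sum o_{123}^\alpha\del_{o_{123}^\alpha}$ and the identity $\iota_\xi\omega^{J_i}=\theta_i$ to rewrite $\A(Z)=\int dH_Z(\xi)\dV$; the inequality $a\geq\A_H(Z)$ then yields a uniform $L^2$-bound on $Z^\odd$ in the style of Abbondandolo--Schwarz. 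From there the paper bootstraps via the key identity $\|J_\del Z\|_{H^k}=\|\nabla Z\|_{H^k}$ (coming from $J_\del^2=-\Delta$) and Sobolev embedding $H^2\hookrightarrow L^\infty$. Your route is more hands-on and more elementary---Hodge-decomposing the closed $1$-form $p^\alpha$ and reducing to a scalar Poisson equation---but it leans hard on the specific feature $\del_{o_{ij}}H=0$; the paper's approach is less sharp here but is written in the general symplectic/variational language that parallels the classical cotangent-bundle story and feeds directly into the Floer-curve estimates that follow. Your closing remark about the action bound being reserved for Floer trajectories is therefore slightly off: in the paper it is already used for the orbits, whereas the subsequent Floer-curve bound is obtained by a maximum principle and does not use it.
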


We will start with the following proposition.
\begin{proposition}\label{prop:L2}
    For any $a\in\R$, the set $\mathcal{P}_a(H)$ is bounded in $L^2$, if $W$ has finite $C^1$-norm. 
\end{proposition}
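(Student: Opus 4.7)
The plan is to extract an $L^2$ bound directly from the critical point equation $J_\partial Z = \nabla H_t(Z)$ by exploiting that $J_\partial = \Id_d\otimes(d+d^*)$ swaps the even and odd subspaces of $\bar B$. For the given Hamiltonian $H_t(Z) = \tfrac12|Z^\odd|^2 + W_t(q,p)$, the gradient has even component $(\nabla_q W,0,0,0)$ (only the $q$-slot is nonzero) and odd component $(p+\nabla_p W,\, o_{123})$, so the equation splits as
\begin{align*}
J_\partial Z^\odd &= (\nabla_q W,0,0,0),\\
J_\partial Z^\even &= (p+\nabla_p W,\, o_{123}).
\end{align*}

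First I would use the first line to control the gradient of $Z^\odd$. Since $\|(d+d^*)\eta\|_{L^2}^2 = \|d\eta\|^2_{L^2}+\|d^*\eta\|^2_{L^2} = \langle \eta,\,\Delta_{\mathrm{Hodge}}\,\eta\rangle$ on periodic forms --- the cross term being killed by bidegree orthogonality --- this gives $\|\nabla Z^\odd\|_{L^2} \leq \|\nabla_q W\|_{L^2} \leq C\|W\|_{C^1}$. Integrating the second line over $\T^3$ and using that $J_\partial$, being a sum of partial derivatives, annihilates the integral of any periodic form, then yields $\int p + \int \nabla_p W = 0$ and $\int o_{123} = 0$, so the mean $\bar Z^\odd$ is bounded by $C\|W\|_{C^1}$. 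A Poincar\'e inequality on $\T^3$ (valid because $\ker J_\partial$ consists only of constants) combines these two pieces into a uniform $L^2$ bound on $Z^\odd$.

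Substituting this $L^2$ bound back into the second equation gives $\|\nabla Z^\even\|_{L^2}\leq C$. Because $Z\in\Lambda$ is null-homotopic, $Z^\even$ lifts to a periodic map $\wt{Z}^\even:\T^3\to \R^{4d}$, unique up to translation by $\Z^{4d}$; normalizing so that $\bar{\wt{Z}}^\even\in[0,1)^{4d}$ and applying Poincar\'e once more controls $\|\wt{Z}^\even\|_{L^2}$. Combining the two halves furnishes the desired uniform $L^2$ bound on $Z$.

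The main subtlety I expect is organizational rather than analytic: correctly identifying which slots of $\nabla H$ belong to the even versus odd part in the $n=3$ setting of \cref{eq:Ji3}, and handling the $\Z^{4d}$-ambiguity in the lift of $Z^\even$ so that the bound is uniform across $\mathcal{P}_a(H)$. It is worth remarking that the argument above never actually invokes the action bound $\A_H(Z)\leq a$; the conclusion really holds on all of $\mathcal{P}(H)$, and the stated form with parameter $a$ is presumably chosen for consistency with the phrasing of \cref{lem:C0bounds} that this proposition feeds into.
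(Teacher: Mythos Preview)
Your proof is correct and takes a genuinely different route from the paper's. The paper argues in the spirit of \cite{abbondandolo2006floer}: it introduces the radial vector field $\xi = \sum_{i,\alpha}p_i^\alpha\del_{p_i^\alpha} + \sum_\alpha o_{123}^\alpha\del_{o_{123}^\alpha}$, checks that $\iota_\xi\omega^{J_i}=\theta_i$, and uses the critical-point equation to rewrite $\A(Z)=\int dH_Z(\xi)\dV$. The action bound $a\geq\A_H(Z)$ then yields $a\geq \int(dH_Z(\xi)-H(Z))\dV\geq h_0\|Z^\odd\|_{L^2}^2-h_1$ from the quadratic growth of $H$ in the odd directions; the even part is dismissed by compactness of $\T^m$. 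You instead bypass the action functional entirely and work directly with the PDE, using the parity splitting of $J_\partial Z=\nabla H$ to bound $\|\nabla Z^\odd\|_{L^2}$ from the even half, the mean $\bar Z^\odd$ from integrating the odd half, and then closing with Poincar\'e. Your remark that the action bound is never used is correct and is a genuine strengthening for this particular $H$. The paper's approach is the standard Floer-theoretic template and would extend to Hamiltonians that are merely superlinear in the fiber directions; yours is more elementary but leans on the exact identity $(\nabla H)^\odd=Z^\odd+\text{(bounded)}$. One simplification: for $Z^\even$ you do not need to lift and apply Poincar\'e---the paper just notes that $Z^\even$ takes values in the compact torus $\T^m$, so its $L^2$-norm is bounded outright.
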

\begin{proof}
    First, define the vector field 
    \[\xi = \sum_{i,\alpha}p_i^\alpha\del_{p_i^\alpha} + \sum_\alpha o_{123}^\alpha\del_{o_{123}^\alpha}\]
on $B$. A straightforward calculation shows that $\iota_\xi\omega^{J_i}=\theta_i$ for $i=1,2,3$ where the explicit formulas for $\omega^{J_i}=d\theta_i$ are given in \cref{eq:omegas}.

Let $Z\in\mathcal{P}_a(H)$. Note that
    \begin{align}\label{eq:pre}
    \sum_i\omega^{J_i}(\cdot,\partial_i Z)=dH_Z.
    \end{align}
    So 
    \begin{align*}
        \A(Z) &= \sum_i\int_{\T^3}\theta_i(\del_i Z)\dV\\
        &= \sum_i\int_{\T^3}\omega^{J_i}(\xi,\del_i Z)\dV\\
        &=\int_{\T^3} dH_Z(\xi)\dV.
    \end{align*}
    Now by assumption
    \begin{align*}
        a &\geq \A_H(Z)=\A(Z)-\int H(Z)\dV\\
        &=\int \left(dH_Z(\xi)-H(Z)\right)\dV\\
        &\geq h_0||Z^\odd||^2_{L^2}-h_1,
    \end{align*}
    for some $h_0>0$ and $h_1\geq 0$ (compare lemma 1.10 from \cite{abbondandolo2006floer}). Here the last inequality follows from the fact that both $W$ and its derivative are bounded, hence $H$ grows quadratically in the $p_i^\alpha$- and $o_{123}^\alpha$-coordinates. We conclude that there is a uniform bound on the $L^2$-norm of $Z^\odd$. As $Z^\even$ maps into $\T^m$, its $L^2$-norm is bounded by construction. Thus the proposition follows. 
\end{proof}
\begin{proof}[Proof of \cref{lem:C0bounds}]
    We will prove boundedness in $H^2$ by a bootstrapping argument. The $L^\infty$-bounds follow from Sobolev's embedding theorem as $2\cdot2>3$.
    Recall that $J_\partial Z=\nabla H(Z)$. Taking norms on both sides we get
    \[
    ||J_\partial Z||_{L^2}=||\nabla H(Z)||_{L^2} \leq ||Z^\odd||_{L^2} + ||\nabla W_t(q,p)||_{L^2}.
    \]
    As the gradient of $W$ is bounded and $\mathcal{P}_a(H)$ is bounded in $L^2$, we get a uniform bound on the right-hand side of this equation. Also, for any $k$
    \begin{align}\label{eq:JdelHk}
    \begin{split}
        ||J_\partial Z||^2_{H^k} &= \sum_{j\leq k}\sum_{i_1,\ldots,i_j}||\partial_{i_1}\cdots\partial_{i_j}J_\partial Z||^2_{L^2}\\
        &=\sum_{j\leq k}\sum_{i_1,\ldots,i_j}\int_{\T^3}\langle \partial_{i_1}\cdots\partial_{i_j}J_\partial Z,\partial_{i_1}\cdots\partial_{i_j}J_\partial Z\rangle\dV\\
        &=-\sum_{j\leq k}\sum_{i_1,\ldots,i_j}\int_{\T^3}\langle \partial_{i_1}\cdots\partial_{i_j}Z, \partial_{i_1}\cdots\partial_{i_j}\Delta Z\rangle\dV\\
        &=\sum_{j\leq k}\sum_{i_1,\ldots,i_j,i}\int_{\T^3}|\partial_{i_1}\cdots\partial_{i_j}\partial_i Z|^2\dV\\
        &=||\nabla Z||^2_{H^k},
        \end{split}
    \end{align}
    where we used partial integration, the fact that $J_\partial$ is self-adjoint with respect to the $L^2$-metric and that it squares to the negative Laplacian.
    Applying this to $k=0$ we see that both $||Z||_{L^2}$ and $||\nabla Z||_{L^2}=||J_\del Z||_{L^2}$ are uniformly bounded, so that we get uniform bounds in $H^1$. 

    By the same reasoning we also get that
    \[
    ||\nabla Z||_{H^1}=||J_\partial Z||_{H^1} \leq ||Z^\odd||_{H^1} + ||\nabla W(q,p)||_{H^1}.
    \]
    By a chain-rule argument it follows that $||\nabla W(q,p)||_{H^1}$ is bounded by the $C^2$-norm of $W$ and the $H^1$-norm of $Z$. Since we already have boundedness in $H^1$, we get a uniform bound on the $H^2$-norm of $Z$. This completes the proof. 
\end{proof}

\subsection{Bounds on Floer curves}

Let $\chi_\rho:[0,\infty)\to \R$ be a smooth cut-off function such that $\chi_\rho(s)=1$ when $s\leq \rho-1$ and $\chi_\rho(s)=0$ for $s\geq \rho$. We define the Hamiltonian $\bar{H}^{\rho}: \T^3\times B\to\R$ by 
\begin{align}\label{eq:Hbeta}
\bar{H}^{\rho}_{t}(Z) = \frac{1}{2}|Z^\odd|^2 + \chi_\rho(|Z^\odd|)W_t(q,p).
\end{align}
Note that whenever $|Z^\odd|>\rho$ we have $\bar{H}^{\rho}_{t}(Z)=\frac{1}{2}|Z^\odd|^2=:H^0(Z)$.

The following straightforward corollary of \cref{lem:C0bounds} gives the correspondence between Hamiltonian orbits of $H$ and $\bar{H}^\rho$.
\begin{corollary}\label{cor:cut-off}
    For any $a\in\R$ there exists $\rho>0$ such that $H$ and $\bar{H}^\rho$ have the same Hamiltonian orbits of action less than $a$, meaning $\mathcal{P}_a(H)=\mathcal{P}_a(\bar{H}^\rho)$.\qed
\end{corollary}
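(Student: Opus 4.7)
The plan is to apply Lemma \ref{lem:C0bounds} to both $H$ and $\bar H^\rho$ and then pick $\rho$ large enough that the cut-off is inactive on every orbit of action less than $a$. First, Lemma \ref{lem:C0bounds} applied directly to $H$ produces a constant $R>0$ such that $\|Z^\odd\|_{L^\infty}<R$ for every $Z\in\mathcal{P}_a(H)$.

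Next, I would seek the analogous bound for $\mathcal{P}_a(\bar H^\rho)$, crucially with a constant independent of $\rho$. The point is that $\bar H^\rho_t(Z)=\frac{1}{2}|Z^\odd|^2+W^\rho_t(Z)$ with $W^\rho_t(Z):=\chi_\rho(|Z^\odd|)W_t(q,p)$, so $\bar H^\rho$ is itself of the form to which Lemma \ref{lem:C0bounds} applies. If I choose the family of cut-offs by translating a single fixed profile, say $\chi_\rho(s)=\chi(s-\rho+1)$ for one smooth $\chi$ that is $1$ on $(-\infty,0]$ and $0$ on $[1,\infty)$, then $\|\chi_\rho\|_\infty$, $\|\chi'_\rho\|_\infty$, and $\|\chi''_\rho\|_\infty$ are independent of $\rho$. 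A short chain-rule computation, using that $|Z^\odd|$ has bounded derivatives on the annular support of $\chi'_\rho$, then yields $\|W^\rho\|_{C^2}\leq C\|W\|_{C^2}$ with $C$ independent of $\rho$. Inspecting the proofs of Proposition \ref{prop:L2} and Lemma \ref{lem:C0bounds}, the output constants depend only on $a$ and the $C^2$-norm of the perturbation, so we obtain $R'>0$, independent of $\rho$, such that $\|Z^\odd\|_{L^\infty}<R'$ for every $Z\in\mathcal{P}_a(\bar H^\rho)$.

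Finally, set $\rho:=\max(R,R')+2$. For any $Z$ lying in $\mathcal{P}_a(H)\cup\mathcal{P}_a(\bar H^\rho)$ one then has $|Z^\odd(t)|<\rho-1$ for all $t$, hence $\chi_\rho(|Z^\odd(t)|)\equiv 1$ and $\chi'_\rho(|Z^\odd(t)|)\equiv 0$ along the orbit. This forces $\bar H^\rho=H$ and $\nabla \bar H^\rho=\nabla H$ pointwise along $Z$, so the Hamilton equations and the action integrals along $Z$ coincide. The inclusions in both directions follow, giving $\mathcal{P}_a(H)=\mathcal{P}_a(\bar H^\rho)$.

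The only step demanding any real care is the $\rho$-uniformity of the $L^\infty$-bound on $\mathcal{P}_a(\bar H^\rho)$; after that observation the corollary is essentially definitional.
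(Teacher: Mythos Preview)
Your argument is correct and supplies precisely the details the paper omits (the paper states the corollary with a bare \qed, calling it a ``straightforward corollary'' of Lemma~\ref{lem:C0bounds}). One small wording issue: $\bar H^\rho$ is not literally of the form hypothesised in Lemma~\ref{lem:C0bounds}, since $W^\rho_t(Z)=\chi_\rho(|Z^\odd|)W_t(q,p)$ depends on $o_{123}$ through $|Z^\odd|$ and not only on $(q,p)$. You immediately repair this by ``inspecting the proofs'', which is exactly right---nothing in the proofs of Proposition~\ref{prop:L2} or Lemma~\ref{lem:C0bounds} uses the restriction to $(q,p)$; only the $C^2$-bound on the perturbation matters. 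Your key observation, that the resulting $L^\infty$-bound depends only on $a$ and on $\|W^\rho\|_{C^2}$, and that the latter is uniform in $\rho$ for a translated cut-off family (the second derivatives of $|Z^\odd|$ being of order $1/|Z^\odd|\le 1/(\rho-1)$ on the annular support), is correct and is the substantive content needed for the reverse inclusion $\mathcal P_a(\bar H^\rho)\subseteq\mathcal P_a(H)$.
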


In view of the discussion under \cref{cor:LaplaceCuplength} this finishes the proof of \cref{thm:main}. Since the aim of this article is not just to prove \cref{thm:main}, but also to establish a connection with hyperk\"ahler Floer theory, we would like to include the $C^0$-bounds on Floer curves here as well. A Floer curve is defined as a negative gradient line of $\A_{\bar{H}^\rho}$ with respect to the $L^2$-metric. 
\begin{lemma}\label{lem:floerbounds}
    Let $\wt{Z}:\R\times\T^3\to B$ be a Floer curve
    \begin{align}\label{eq:floer}
        \partial_s\wt{Z}+ J_\partial \wt{Z}=\nabla \bar{H}^{\rho}_{t}(\wt{Z})
    \end{align}
    such that $\wt{Z}$ converges to critical points of $\A_{\bar{H}^\rho}$ for $s\to\pm\infty$. Then $|\wt{Z}^\odd(s,t)|\leq \rho$ for any $(s,t)\in\R\times\T^3$.
\end{lemma}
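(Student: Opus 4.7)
The plan is a maximum principle argument on the nonnegative function $f(s,t):=|\wt{Z}^\odd(s,t)|^2$, exploiting the fact that on the open set $U:=\{(s,t)\in\R\times\T^3:f(s,t)>\rho^2\}$ the cut-off $\chi_\rho(|\wt{Z}^\odd|)$ vanishes identically. Thus on $U$ the Hamiltonian reduces to $H^0(Z)=\frac{1}{2}|Z^\odd|^2$, whose gradient $\nabla H^0(Z)=(0,Z^\odd)$ is supported only on the odd-degree component.

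The first step is a short computation turning the Floer equation into an elliptic subsolution inequality. Because $\del_s$ commutes with $J_\del$ and $J_\del^2=-\Delta_t$, applying the operator $\del_s-J_\del$ to \cref{eq:floer} gives
\[
\Delta_{s,t}\wt{Z}=(\del_s-J_\del)\nabla\bar{H}^\rho_t(\wt{Z}).
\]
On $U$ the right-hand side equals $(\del_s-J_\del)(0,\wt{Z}^\odd)=(-J_\del\wt{Z}^\odd,\del_s\wt{Z}^\odd)$, so projecting onto the odd-degree component yields $(\Delta_{s,t}\wt{Z})^\odd=\del_s\wt{Z}^\odd$. Combining this with the identity $\frac{1}{2}\Delta_{s,t}|u|^2=|\nabla_{s,t}u|^2+\langle u,\Delta_{s,t}u\rangle$ produces
\[
(\Delta_{s,t}-\del_s)\,f\,=\,2|\nabla_{s,t}\wt{Z}^\odd|^2\geq 0\qquad\text{on }U,
\]
so $f$ is a subsolution on $U$ of the uniformly elliptic operator $L:=\Delta_{s,t}-\del_s$.

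The second step is to apply the weak maximum principle for $L$ on the exhausting family of bounded cylinders $[-N,N]\times\T^3$ and let $N\to\infty$. Since $\wt{Z}$ converges to critical points $Z_\pm$ of $\A_{\bar{H}^\rho}$ as $s\to\pm\infty$, the restrictions $f(\pm N,\cdot)$ converge uniformly to $|Z_\pm^\odd|^2$, while $f=\rho^2$ on $\del U\cap\{|s|<N\}$. Provided $|Z_\pm^\odd|\leq\rho$ pointwise on $\T^3$, the maximum principle applied to $f$ on $U\cap([-N,N]\times\T^3)$ gives $\sup_{U\cap([-N,N]\times\T^3)}f\leq\rho^2+o(1)$ as $N\to\infty$, hence $f\leq\rho^2$ throughout, i.e. $U=\emptyset$ and $|\wt{Z}^\odd|\leq\rho$.

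The main obstacle is therefore the pointwise bound $|Z_\pm^\odd|\leq\rho$ on the asymptotic critical points, which I would handle by running the analogous maximum principle on the closed manifold $\T^3$. For any critical point $Z$ of $\A_{\bar{H}^\rho}$ and any point of $V:=\{t\in\T^3:|Z^\odd(t)|>\rho\}$, the equation reduces to $J_\del Z=(0,Z^\odd)$; its even part gives $J_\del Z^\odd=0$, and applying $J_\del$ again yields $\Delta_t Z^\odd=0$ on $V$, so $|Z^\odd|^2$ is subharmonic there. The strong maximum principle on $\T^3$ then forces $V$ to be either empty or all of $\T^3$; in the latter case $Z^\odd$ would be a non-zero constant odd form, hence in $\ker J_\del$ and orthogonal to $\operatorname{Im}J_\del$ by Hodge decomposition, contradicting the odd part $J_\del Z^\even=Z^\odd$ of the critical point equation. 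Alternatively, one can invoke \cref{cor:cut-off} together with \cref{lem:C0bounds} and enlarge $\rho$ so that $\mathcal{P}_a(\bar{H}^\rho)=\mathcal{P}_a(H)\subset\{|Z^\odd|\leq\rho-1\}$ for the action level $a$ of the Floer curve at hand.
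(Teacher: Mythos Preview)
Your proposal is correct and follows essentially the same maximum-principle strategy as the paper: both derive the subsolution inequality $(\Delta_{s,t}-\partial_s)|\wt{Z}^\odd|^2\geq 0$ on the region where $|\wt{Z}^\odd|>\rho$ by applying $\partial_s-J_\partial$ to the Floer equation and projecting to the odd component, and both then appeal to a maximum principle. The paper simply invokes the strong maximum principle together with the convergence of $\wt{Z}$ at $s\to\pm\infty$ to reach a contradiction, while you instead run the weak maximum principle on the truncations $U\cap([-N,N]\times\T^3)$ and separately establish the pointwise bound $|Z_\pm^\odd|\leq\rho$ for the asymptotic critical points. Your explicit treatment of the asymptotic limits (via subharmonicity of $|Z^\odd|^2$ on $\T^3$ and the Hodge-theoretic observation that a nonzero constant cannot lie in the image of $J_\partial$) actually fills in a step that the paper leaves implicit, so the added care is well placed.
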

\begin{proof}
    Suppose $|\wt{Z}^\odd(s_0,t_0)|>\rho$ for some $(s_0,t_0)\in\R\times \T^3$. Then on a neighbourhood of $(s_0,t_0)$ we have $\bar{H}^{\rho}_{t}(\wt{Z}(s,t))=\frac{1}{2}|\wt{Z}^\odd(s,t)|^2$, so
    \[(\partial_s^2+\Delta)\wt{Z}=(\partial_s-J_\partial)(\partial_s+J_\partial)\wt{Z}=(\partial_s-J_\partial)\nabla\bar{H}^{\rho}_{t}(\wt{Z})=(\del_s-J_\del)\wt{Z}^\odd.\]
    As $J_\del$ maps odd coordinates to even ones and vice versa, around the point $(s_0,t_0)$ we get
    \begin{align}\label{eq:etaIb}(\partial_s^2+\Delta)\wt{Z}^\odd=\partial_s\wt{Z}^\odd.\end{align}
    Let $\phi=\frac{1}{2}|\wt{Z}^\odd|^2$. Then
    \begin{align*}
        (\partial_s^2+\Delta)\phi &= \del_s\langle \wt{Z}^\odd,\del_s\wt{Z}^\odd\rangle + \sum_i\del_i\langle \wt{Z}^\odd,\del_i\wt{Z}^\odd\rangle\\
        &=|\del_s\wt{Z}^\odd|^2+\sum_i|\del_i\wt{Z}^\odd|^2+\langle\wt{Z}^\odd,(\del_s^2+\Delta)\wt{Z}^\odd\rangle\\
        &\geq \langle\wt{Z}^\odd,\del_s\wt{Z}^\odd\rangle = \partial_s\phi.
    \end{align*}
    Note that the first two terms on the second line are non-negative, so that the inequality on the next line follows from \cref{eq:etaIb}. We see that $(\partial_s^2+\Delta)\phi-\partial_s\phi\geq 0$, so that by the maximum principle $\phi$ cannot attain a maximum on a neighbourhood of $(s_0,x_0)$. This contradicts the fact that $\wt{Z}$ converges for $s\to\pm\infty$.
\end{proof}

\section{Outlook}\label{sec:outlook}
The connection between Euclidean field theory, hyperk\"ahler Floer theory and Bridges' equations outlined in this article is still in its first stages. In this section we want to outline some of the directions for ongoing research on this topic. We will not give proofs, but sketch some of the ideas involved and give references to their classical counterparts. 

\subsection{Towards an isomorphism with Morse homology}\label{sec:Morse}
As described in this article, Floer-type methods can be used to provide information about solutions to Laplace's equation. Morse-type methods are also suitable for constructing such a proof. In ongoing work we plan to define the appropriate generalization of hyperk\"ahler Floer homology and to prove an isomorphism with the Morse homology involved. In this section, two possible methods of achieving this isomorphism will be roughly outlined. Both of them will also give extra motivation for the definition of the space $B$, used in the previous sections. 

\subsubsection*{Adiabatic limits}
The first method we want to outline is that of taking \emph{adiabatic limits}. This method has been used in \cite{salamonweber} to produce an isomorphism between the Floer homology of the cotangent bundle of some manifold $Q$ with the Morse homology of the free loopspace $\Lambda Q:=C^\infty(S^1,Q)$. In this subsection we will roughly outline in what way the equations would change to produce the result for our case.

Start by defining the $\Z$-modules $C^a$ to be freely generated by the elements of $\mathcal{P}_a(H)$. There are two boundary operators we can define on $C^a$. The first one comes from counting the solutions to the set of equations
\begin{align}\label{eq:morse}
\begin{split}
    \del_s q^\alpha-\Delta q^\alpha-\del_{q^\alpha}V(q)&=0\\
    \del_s o_{ij}^\alpha-\Delta o_{ij}^\alpha&=0,
\end{split}
\end{align}
where  the $o_{ij}$ coordinates are $o_{23},o_{31}$ and $o_{12}$. These equations describe the $L^2$-gradient lines of the Morse function 
\[f(q,o_{23},o_{31},o_{12})=\frac{1}{2}\int_{\T^3}\left(|\nabla q(t)|^2+\sum_{i,j}|\nabla o_{ij}(t)|^2\right)\,dt -\int_{\T^3}V(q(t))\,dt\]
The induced boundary operator will yield a chain complex whose homology gives the Morse homology of $\Lambda^3 Q:=C^\infty(\T^3,Q)$. Note that the equations for the $o_{ij}$ coordinates only have constant solutions. 

The second boundary operator counts the solutions to the following set of equations
\begin{alignat}{5}\label{eq:adiabatic}
\del_s& q^\alpha   &-&\del_1p_1^\alpha&-&\del_2p_2^\alpha&-&\del_3p_3^\alpha -\del_{q^\alpha}V(q) &= 0\nonumber\\
\eps\del_s& p_1^\alpha  &+&\del_1 q^\alpha&+&\del_2 o_{12}^\alpha&-&\del_3 o_{31}^\alpha -p_1^\alpha &= 0\nonumber\\
\eps\del_s& p_2^\alpha  &-&\del_1o_{12}^\alpha&+&\del_2q^\alpha&+&\del_3o_{23}^\alpha -p_2^\alpha &=0\nonumber\\
\eps\del_s& p_3^\alpha  &+&\del_1o_{31}^\alpha&-&\del_2o_{23}^\alpha&+&\del_3q^\alpha -p_3^\alpha &=0\nonumber\\
\del_s& o_{23}^\alpha   &-&\del_1o_{123}^\alpha&+&\del_2p_3^\alpha&-&\del_3p_2^\alpha &=0\\
\del_s& o_{31}^\alpha   &-&\del_1p_3^\alpha&-&\del_2o_{123}^\alpha&+&\del_3p_1^\alpha &=0\nonumber\\
\del_s& o_{12}^\alpha   &+&\del_1p_2^\alpha&-&\del_2p_1^\alpha&-&\del_3o_{123}^\alpha &= 0\nonumber\\
\eps\del_s& o_{123}^\alpha  &+&\del_1o_{23}^\alpha&+&\del_2o_{31}^\alpha&+&\del_3o_{12}^\alpha -o_{123}^\alpha&=0.\nonumber
\end{alignat}
For $\epsilon=1$ the resulting complex will compute the generalization of the hyperk\"ahler Floer homology of $B$ and the homologies for different values of $\epsilon>0$ will stay the same by homotopical arguments. 

Note that when $\epsilon=0$ the solutions to \cref{eq:adiabatic} correspond to \cref{eq:morse}. The key to proving an isomorphism between the Morse and Floer homology would be to find an $\epsilon_0$ such that for $0<\epsilon<\epsilon_0$ there is a correspondence between (classes of) solutions to \cref{eq:morse} and \cref{eq:adiabatic}.

\begin{remark}
    The way that the $\epsilon$ is introduced in the Floer equation in \cite{salamonweber} is by rescaling the metric in the non-compact fibers of the cotangent-bundle. The structure of \cref{eq:adiabatic} gives a motivation to not compactify the $p$- and $o_{123}$-directions in the definition of the space $B$ in \cref{sec:setting}. This way we could view these coordinates as non-compact fibers of a bundle over the base $\T^{4d}$ and implement a similar construction to get to \cref{eq:adiabatic}. 
\end{remark}

\subsubsection*{Floer half-curves with Lagrangian boundary conditions}
A different method of providing an isomorphism between Morse and Floer homology comes from \cite{abbondandolo2006floer}. They define an isomorphism between the chain complexes that compute the Morse and Floer homologies by counting Floer half-cylinders in the cotangent bundle with boundary in a compact Lagrangian. In our case, this would amount to counting maps $\wt{Z}:(0,\infty)\times \T^3\to B$ such that 
\begin{align*}
    \partial_s\wt{Z}+ J_\partial \wt{Z}&=\nabla H(\wt{Z})\\
    \lim_{s\to\infty}\wt{Z}(s,\cdot)&=Z\\
    \lim_{s\to 0^+}\textup{pr}_{\textup{even}}\wt{Z}(s,\cdot)&\in W^u((q,o_{23},o_{31},o_{12})).
\end{align*}
Here $Z\in\mathcal{P}(H)$, $(q,o_{23},o_{31},o_{12})$ is a solution to Laplace's equations (\ref{eq:LaplaceSystem}) and (\ref{eq:LaplaceO}), $\textup{pr}_{\textup{even}}$ projects onto the space of $q,o_{23},o_{31},o_{12}$ coordinates and $W^u$ is the unstable manifold for the Morse gradient flow of the Laplace equation. A proof of an isomorphism between Morse and Floer homologies would then consist of proving that there are no such maps unless $f(q,o_{23},o_{31},o_{12})\geq \A_H(Z)$. In the case of equality $(q,o_{23},o_{31},o_{12})$ and $Z$ correspond to the same solution of the Laplace equation, in which case there is a unique Floer half-cylinder.
This would then mean that the chain homomorphism between the Morse and Floer complexes defined by counting these maps is lower-triangular and has only $\pm1$ on the diagonal. Since these type of morphisms are invertible, this yields an isomorphism of the homologies. We refer to \cite{abbondandolo2006floer} for details of this construction in the $n=1$ cotangent bundle case. 

\begin{remark}
    For this proof to work, we need the projection $\textup{pr}_{\textup{even}}$ to map to a compact space. In \cite{abbondandolo2006floer} the projection used maps to the base manifold of the cotangent bundle, which is a compact Lagrangian. This motivates the choice to compactify the $q$- and $o_{ij}$-directions in our definition of $B$ in \cref{sec:setting}. The space $\T^{4d}\subseteq B=\T^{4d}\times \R^{4d}$ is a compact Lagrangian with respect to any of the forms $\omega^{J_i}$. 
\end{remark}

\subsection{Bridges polysymplectic reduction}\label{sec:reduction}
When studying Hamiltonian systems with symmetries, a standard approach is to quotient out those symmetries and work on the quotient manifold. For symplectic manifolds this procedure is standard: when a Lie group $G$ acts on a symplectic manifold $\Sigma$, a \emph{moment map} $\mu:\Sigma\to \mathfrak{g}^*$ is defined, where $\mathfrak{g}$ is the Lie algebra of $G$. It can then be proven that $\mu^{-1}(0)/G$ can be equipped again with a natural choice of symplectic form. Moreover, when the symplectic manifold was in fact K\"ahler, the quotient is too. 

A similar construction can be made for field theories. Since the space $B$ comes equipped with three symplectic forms, one can enhance the construction of the quotient for a single symplectic form to get the desired quotient. We define $\mu_i:\Sigma\to\mathfrak{g}^*$ to be the moment maps for $\omega^{J_i}$ for $i=1,2,3$ and combine them into $\mu=\mu_1\times\mu_2\times\mu_3:M\to\mathfrak{g}^*\times\mathfrak{g}^*\times\mathfrak{g}^*$. As before, the set $\mu^{-1}(0)/G$ inherits the three K\"ahler structures from $B$. This way we can link Euclidean field theories with symmetries to the compact target manifolds studied by \cite{hohloch2009hypercontact,ginzburg2012hyperkahler,ginzburg2013arnold}.

Note that a notion of reduction also exists for more general polysymplectic systems like the one in \cref{eq:DW} (see \cite{marrero2015reduction}). However, the construction is much more complicated, since the forms $\omega^{K_i}$ are not symplectic themselves and therefore we can not just combine their moment maps. In \cite{marrero2015reduction} it can be seen also that additional assumptions have to be made for the polysymplectic quotient to exist. As can be seen from the discussion above, Bridges' formalism makes for a much more straightforward connection between the Euclidean field theories and compact quotients of the manifolds involved.

\bibliography{mybib}{}
\bibliographystyle{alpha}

\end{document}